\newtheorem{theorem}{Theorem}
\newtheorem{corollary}[theorem]{Corollary}
\theoremstyle{remark}
\newcommand{\Z}{{\mathbb Z}}
\newcommand{\C}{{\mathbb C}}
\newcommand{\Q}{{\mathbb Q}}
\newcommand{\wzeta}{\widetilde{\zeta}}
\title{Hecke eigenform and double Eisenstein series}
\author{Koji Tasaka\footnote{email : tasaka@ist.aichi-pu.ac.jp, School of Information Science and Technology, Aichi Prefectural University}}
\date{}
\begin{document}
\maketitle

\begin{abstract}
Cusp forms for the full modular group can be written as linear combinations of double Eisenstein series introduced by Gangl, Kaneko and Zagier. 
We give an explicit formula for decomposing a Hecke eigenform into double Eisenstein series.
\end{abstract}


\section{Main result and its proof}

Double Eisenstein series $G_{r,s}$ was first studied by Gangl, Kaneko and Zagier \cite[Section 7]{GKZ} in connection with double zeta values
\[ \zeta(r,s)=\sum_{0<n<m}\frac{1}{n^rm^s}\quad (r\ge1,s\ge2).\]
For $k$ even, they proved that the $\C$-vector space $\mathcal{DE}_k$ spanned by the Eisenstein series $G_k$ and $G_{r,s} \ (r+s=k,\ r,s\ge1)$ contains the $\C$-vector space $S_k$ of cusp forms of weight $k$ for ${\rm SL}_2(\Z)$. 
Therefore for $f\in S_k$ there are numbers $b_k,a_{r,k-r} \in \C$ such that $f=b_k G_k + \sum a_{r,k-r} G_{r,k-r}$, and its constant term as $q$-series leads to the relation $b_k \zeta(k) + \sum a_{r,k-r} \zeta(r,k-r)=0$.
This explains a phenomenon discovered by Zagier \cite[\S 8]{Z} that cusp forms give rise to linear relations among double zeta values.
Since double zeta values satisfy numerous linear relations, the expression of linear relations obtained from a cusp form is not unique.

The motivation of this paper is the question: which relations of double zeta values can give back cusp forms?
Namely, find a linear relation $b_k \zeta(k) + \sum a_{r,k-r} \zeta(r,k-r)=0$ such that $b_k G_k + \sum a_{r,k-r} G_{r,k-r}$ is a cusp form.
In this paper, we give a choice of such coefficients $b_k,a_{r,k-r} \ (1\le r\le k-1)$ by showing a canonical expression of normalized Hecke eigenforms of $S_k$ in terms of double Eisenstein series.

\

We set up notation.
For a positive integer $k\ge1$ we define the (formal) Eisenstein series as a $q$-series
\[ G_k(q)=\wzeta(k) + \frac{(-1)^{k}}{(k-1)!} \sum_{n>0} \sigma_{k-1}(n) q^n ,\]
where we let $\wzeta(k)= \zeta(k)/(2\pi \sqrt{-1})^k $ for $k\ge2$ and $\wzeta(1):=0$, and $\sigma_{k-1}(n)=\sum_{d\mid n} d^{k-1}$ is the usual divisor function.
The $q$-series $G_k(q)$ is a modular form of weight $k$ for ${\rm SL}_2(\Z)$, if $k\ge4$ is even and $q=e^{2\pi \sqrt{-1}\tau}$ with $\tau$ being a variable in the complex upper half-plane (we refer to \cite[Chapter VII]{Serre} for the basics of modular forms).
The $q$-series $G_k(q)$ with $k$ odd may not play a role in the theory of classical modular forms, but is important in connection with zeta values.

Following \cite[Section 7]{GKZ}, we define the double Eisenstein series as a $q$-series.
Let $\wzeta(r,s)= \zeta(r,s)/(2\pi \sqrt{-1})^{r+s} $ for $r\ge1,s\ge2$ and $\wzeta(r,1) = -\wzeta(1,r)-\wzeta(r+1)$ for $r\ge2$.
We also let $\wzeta(1,1)=-\frac12 \wzeta(2)$ for completeness of the definition.
Put
\[ C_{r,s}^p=\delta_{r,p}+(-1)^r \binom{p-1}{r-1}+(-1)^{p-s}\binom{p-1}{s-1}\in\Z,\]
where $\delta_{r,p}$ is Kronecker's delta.
The double Eisenstein series $G_{r,s}(q)$ is then defined for integers $r,s\ge1$ by
\[ G_{r,s}(q) = \wzeta(r,s) + \sum_{\substack{h+p=r+s\\h,p\ge1}} C_{r,s}^p g_h(q) \wzeta(p) + g_{r,s}(q) + \frac12 \varepsilon_{r,s}(q) \in \C[[q]], \]
where $q$-series $g_h(q),g_{r,s}(q),\varepsilon_{s,r}(q)$ are defined in \cite[Eqs.~(51), (53), (56)]{GKZ}: for integers $h,r,s\ge1$ we let
\begin{align*}
g_{h}(q)&= \frac{(-1)^{h}}{(h-1)!} \sum_{\substack{0<d\\ 0<c}} c^{h-1} q^{cd}, \quad g_{r,s}(q)= \frac{(-1)^{r+s}}{(r-1)!(s-1)!} \sum_{\substack{0<d_1<d_2\\ 0<c_1,c_2}} c_1^{r-1}c_2^{s-1} q^{c_1d_1+c_2d_2},\\
\varepsilon_{s,r}(q)&=\delta_{s,2}g_r^\ast(q)-\delta_{s,1}g_{r-1}^\ast (q) + \delta_{r,1} \big( g_{s-1}^\ast (q) + g_r(q) \big) + \delta_{r,1}\delta_{s,1} g_2(q)
\end{align*}
with $g_k^\ast (	q) = \frac{(-1)^k}{k!} \sum_{d,c>0} d c^k q^{dc} \ (k\ge0)$.
Note that in \cite[Section 7]{GKZ} they use the opposite convention, i.e. their double zeta value $\zeta(r,s)$ converges if $r\ge2$ and $s\ge1$.

From the period theory of modular forms, we define the completed $L$-function of $f\in S_k$ by
\[L_f^\ast(s) = \int_0^{\infty} f(it)t^{s-1}dt.\]
We set the numbers $q_{i,j}(f)\in \C 
$ for $f\in S_k$ and integers $i,j\ge1$ with $i+j=k$ by
\begin{equation}\label{eq:q} 
q_{i,j}(f) = \sum_{\substack{r+s=k\\r,s:{\rm odd}}} (-1)^{\frac{s-1}{2}} L_f^\ast(s) \binom{i-1}{s-1}. 
\end{equation}
In \cite{Popa}, $q_{i,j}(f)$ is denoted by $s_{i-1}^+(f)$ (the subscript $j$ does not appear in the right hand-side of \eqref{eq:q}, but we keep it for convenience).
Note that by the functional equation $L_f^\ast(s)=(-1)^\frac{k}{2}L_f^\ast(k-s)$ we have $q_{k-1,1}(f)=0$ for $f\in S_k$.

\

We can now formulate the main result of this paper.

\begin{theorem}\label{main}
For a normalized Hecke eigenform $f\in S_k$, we have
\begin{equation}\label{eq:main} 
\sum_{\substack{r+s=k\\r,s\ge1:{\rm odd}}} q_{r,s}(f)G_{r,s}^\frac12(q)  = \frac{(-1)^{\frac{k}{2}} L_f^\ast (k-1)}{4(k-2)!} f ,
\end{equation}
where we write $G_{r,s}^\frac12(q)=G_{r,s}(q)+\frac12 G_{r+s}(q)$.
\end{theorem}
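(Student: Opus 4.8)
The plan is to prove the identity \eqref{eq:main} directly in $\C[[q]]$ by comparing, for every $n\ge0$, the coefficient of $q^n$ on both sides. Write $[q^n]$ for extraction of the $q^n$-coefficient, abbreviate the target scalar by $c_f=\frac{(-1)^{k/2}L_f^\ast(k-1)}{4(k-2)!}$, set $F(q)=\sum_{r,s}q_{r,s}(f)G_{r,s}^{1/2}(q)$ for the left-hand side, and let $a_f(n)$ denote the $n$-th Fourier coefficient of $f$, so that $a_f(0)=0$ and $a_f(1)=1$. Since two formal $q$-series are equal iff all coefficients agree, it suffices to show $[q^n]F=c_f\,a_f(n)$ for all $n\ge0$. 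Throughout, the sum over "$r,s$ odd" is a sum over odd $r$ with $s=k-r$ (automatically odd because $k$ is even), and the vanishing $q_{k-1,1}(f)=0$ removes the boundary term $r=k-1$.

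The constant term $n=0$ is precisely the relation among double zeta values predicted by Zagier's phenomenon. As $f$ is cuspidal the right-hand side contributes nothing, while the constant term of $G_{r,s}^{1/2}$ is $\wzeta(r,s)+\tfrac12\wzeta(k)$; thus the $n=0$ case reads $\sum_{r,s}q_{r,s}(f)\big(\wzeta(r,s)+\tfrac12\wzeta(k)\big)=0$. I would establish this from the period-polynomial description of cusp-form relations among odd double zeta values à la \cite{GKZ}, using that the coefficients $q_{r,s}(f)$ of \eqref{eq:q} are exactly the odd-period data of $f$, together with the functional equation $L_f^\ast(s)=(-1)^{k/2}L_f^\ast(k-s)$.

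For $n\ge1$ I would expand each constituent of $G_{r,s}^{1/2}$ and collect. The single sums contribute $[q^n]g_h=\frac{(-1)^h}{(h-1)!}\sigma_{h-1}(n)$, and the shifted Eisenstein term $\tfrac12 G_k$ contributes $\tfrac12\frac{(-1)^k}{(k-1)!}\sigma_{k-1}(n)$; together with the products $g_h(q)\wzeta(p)$ these form a diagonal part whose $q_{r,s}(f)$-weighted sum factors as $\sum_{h+p=k}\frac{(-1)^h}{(h-1)!}\sigma_{h-1}(n)\wzeta(p)\big(\sum_{r}q_{r,k-r}(f)\,C_{r,k-r}^p\big)$. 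Here the two binomials in $C_{r,s}^p$ pair against the binomial coefficients appearing in \eqref{eq:q}, and I expect the functional equation of $L_f^\ast$ to collapse the resulting double binomial sums. The terms $\tfrac12\varepsilon_{r,s}$ are supported only at the boundary indices $r=1$ or $s\in\{1,2\}$ and would be bookkept separately.

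The crux is the remaining double part $\sum_{r,s}q_{r,s}(f)\,[q^n]g_{r,s}$, a $q_{r,s}(f)$-weighted count of the lattice configurations $c_1d_1+c_2d_2=n$, $0<d_1<d_2$, with weight $c_1^{r-1}c_2^{s-1}$; one must show that, together with the diagonal contribution, this reassembles into $c_f\,a_f(n)$. This is exactly where the Hecke eigenform hypothesis is essential: the odd period polynomial of $f$ spans the one-dimensional $f$-isotypic piece of the space of period polynomials and satisfies the Manin period relations, and I would convert the lattice sum into a multiple of $a_f(n)$ by a Haberland/Rankin--Selberg-type pairing of $f$ against the Eichler integrals underlying $g_{r,s}$. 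Comparing the two sides at $n=1$, where $a_f(1)=1$, then fixes the proportionality constant as $c_f$ and closes the coefficient comparison. I anticipate that this combinatorial-to-analytic reduction of the double part, rather than the diagonal bookkeeping, will be the main obstacle.
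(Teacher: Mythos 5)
Your plan reduces the theorem to showing, for each $n\ge1$, that the $q_{r,s}(f)$-weighted sum of the coefficients $[q^n]g_{r,s}$ (plus the diagonal bookkeeping) equals $c_f\,a_f(n)$ — but this step, which you yourself flag as "the crux," is precisely the entire content of the theorem, and the proposal offers no mechanism for it. A "Haberland/Rankin--Selberg-type pairing of $f$ against the Eichler integrals underlying $g_{r,s}$" is not an argument that operates coefficientwise on $q^n$: the known route from weighted double divisor sums to Hecke eigenvalues does not go through any direct combinatorial identity, but through holomorphic projection of products of Eisenstein series (Rankin's identity), and this is exactly what the paper imports wholesale as Popa's Theorem 5.1, namely $\sum_{r,s\,{\rm even}} q_{r,s}(f)\bigl(P_{r,s}(q)-\frac{\beta_r\beta_s}{\beta_k}G_k(q)\bigr)=\frac32\frac{(-1)^{k/2}L_f^\ast(k-1)}{(k-2)!}f$. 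To connect that statement about \emph{products} $G_rG_s$ to the \emph{double} Eisenstein series $G_{r,s}$ you further need the double shuffle relation \eqref{eq:ds} of Gangl--Kaneko--Zagier together with their Theorem 3 (the parity result trading even-index for odd-index double Eisenstein series), and to kill the residual $G_k$-component you need the Kohnen--Zagier extra relation \eqref{eq:KZ}. None of these three ingredients, or substitutes for them, appears in your sketch; without them the "lattice configuration" sum simply does not reassemble.

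There is also a structural circularity in your closing step. Fixing the constant of proportionality by comparing coefficients at $n=1$ presupposes that the left-hand side is already known to lie in the one-dimensional space $\C f$. That requires first proving (i) that $\sum q_{r,s}(f)G_{r,s}^{1/2}$ is a genuine modular form and indeed a \emph{cusp} form — nontrivial, since the $G_{r,s}$ are a priori only formal $q$-series, and in the paper cuspidality is exactly where \eqref{eq:KZ} enters to cancel the Eisenstein part — and (ii) for $\dim_\C S_k>1$, that the combination lies in the $f$-isotypic line, which needs Hecke-equivariance or the nondegeneracy of the period pairing, neither of which you establish. Relatedly, your $n=0$ case invokes "the period-polynomial description of cusp-form relations among odd double zeta values," but in the paper that relation (the Corollary, from Ma--Tasaka) is \emph{deduced} from the theorem by taking constant terms, so using it as an input risks circularity unless you cite an independent proof. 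As it stands the proposal is an outline of where a proof would have to go, with the load-bearing steps — GKZ Theorems 3 and 7, Kohnen--Zagier Theorem 9, and Popa's Theorem 5.1, or replacements for them — missing.
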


\begin{proof}
The proof is elegantly done by combining works of Gangl, Kaneko and Zagier \cite{GKZ}, Kohnen and Zagier \cite{KZ}, and Popa \cite{Popa}.
First of all, we recall some results of \cite{GKZ}.
Let 
\[ P_{r,s}(q)=G_{r}(q)G_s(q) + (\delta_{r,2}+\delta_{s,2})\frac{G_{r+s-2}'(q)}{2(r+s-2)} ,\]
where $G'_k(q)=q\frac{d}{dq}G_k(q)$.
It was shown in \cite[Theorem 7]{GKZ} that the double Eisenstein series satisfies the double shuffle relation: for $r,s\ge1 \ (r+s\ge3)$ we have
\begin{equation}\label{eq:ds}
\begin{aligned}
P_{r,s}(q) &= G_{r,s}(q)+G_{s,r}(q)+G_{r+s}(q)\\
&=\sum_{\substack{i+j=r+s\\i,j\ge1}}\left(\binom{j-1}{r-1}+\binom{j-1}{s-1}\right) G_{i,j}(q).
\end{aligned}
\end{equation}
(Theorem 7 of \cite{GKZ} only claims that the real part without constant terms of the relation \eqref{eq:ds} holds in $q\Q[[q]]$, but they also checked validities of the imaginary part and constant terms.)
The space $\mathcal{DE}_k$ spanned by $G_{r,s} \ (r+s=k,\ r,s\ge1)$ and $G_k$ is therefore a $q$-series realization of the formal double zeta space $\mathcal{D}_k\otimes \C$ (see \cite[Eq.~(21)]{GKZ} for the definition of $\mathcal{D}_k$).
Applying Theorem 3 of \cite{GKZ} to the double Eisenstein series, we have, for a modular form $f\in M_k=S_k\oplus \C G_k(q)$, 
\begin{equation*}
\sum_{\substack{r+s=k\\r,s:{\rm even}}} q_{r,s}(f) G_{r,s} (q)= 3 \sum_{\substack{r+s=k\\r,s:{\rm odd}}} q_{r,s}(f) G_{r,s} (q) +\sum_{r+s=k}(-1)^{r-1} q_{r,s}(f)G_k(q),
\end{equation*}
where the coefficient of $G_k(q)$ in the above equation can be found in the proof of \cite[Theorem 3]{GKZ}.
From this and the first equality in \eqref{eq:ds} we have
\begin{equation}\label{eq:gkz} 
\sum_{\substack{r+s=k\\r,s:{\rm even}}} q_{r,s}(f)\big(P_{r,s} (q)-G_k(q)\big)= 6 \sum_{\substack{r+s=k\\r,s:{\rm odd}}} q_{r,s}(f) G_{r,s} (q) +2G_k(q)\sum_{r+s=k}(-1)^{r-1} q_{r,s}(f).
\end{equation}
Secondly, we use a result of \cite[Theorem 9]{KZ}: it is an extra relation among values $L_f^\ast(s)$ at $s\in\{1,3,\ldots,k-1\}$.
With $L_f^\ast(s)=(-1)^\frac{k}{2}L_f^\ast(k-s)$ and easily checked identities of binomial coefficients, Theorem 9 (ii) of \cite{KZ} can be reduced to a simple identity
\begin{equation}\label{eq:KZ} 
\sum_{\substack{r+s=k\\r,s:{\rm even}}} q_{r,s}(f) \left( \frac{\beta_r\beta_s}{\beta_k}+1\right) +  \sum_{\substack{r+s=k\\r,s:{\rm odd}}} q_{r,s}(f) =0 \quad (\forall f\in S_k),
\end{equation}
where $\beta_k=\wzeta(k)\in \Q$ for $k$ even.
Using \eqref{eq:gkz}, for a cusp form $f\in S_k$ one computes 
\begin{align*}
&\sum_{\substack{r+s=k\\r,s:{\rm even}}} q_{r,s}(f) \left(P_{r,s}(q) - \frac{\beta_r\beta_s}{\beta_k} G_k(q) \right)\\
&= \sum_{\substack{r+s=k\\r,s:{\rm even}}} q_{r,s}(f) \left(P_{r,s}(q) - G_k(q) \right) +  G_k(q)\sum_{\substack{r+s=k\\r,s:{\rm even}}} q_{r,s}(f) \left(1 - \frac{\beta_r\beta_s}{\beta_k}  \right)  \\
& = 6 \sum_{\substack{r+s=k\\r,s:{\rm odd}}} q_{r,s}(f) G_{r,s}(q) + 2 G_k(q)\sum_{r+s=k}(-1)^{r-1} q_{r,s}(f)+  G_k(q)\sum_{\substack{r+s=k\\r,s:{\rm even}}} q_{r,s}(f) \left(1 - \frac{\beta_r\beta_s}{\beta_k}  \right) \\
& = 6 \sum_{\substack{r+s=k\\r,s:{\rm odd}}} q_{r,s}(f) G_{r,s}(q) + 2 G_k(q)\sum_{\substack{r+s=k\\r,s:{\rm odd}}} q_{r,s}(f)- G_k(q)\sum_{\substack{r+s=k\\r,s:{\rm even}}} q_{r,s}(f) \left(1 + \frac{\beta_r\beta_s}{\beta_k}  \right) \\
& = 6 \sum_{\substack{r+s=k\\r,s:{\rm odd}}} q_{r,s}(f) G_{r,s}^\frac{1}{2} (q),
\end{align*}
where for the last equality we have used \eqref{eq:KZ} (the last equality holds only for a cusp form, but other equalities hold for $f\in M_k$).
Finally, using Theorem 5.1 of \cite{Popa} at $m=0$, we see that the identity
\[\sum_{\substack{r+s=k\\r,s:{\rm even}}} q_{r,s}(f) \left(P_{r,s}(q) - \frac{\beta_r\beta_s}{\beta_k} G_k(q) \right) = \frac{3}{2} \frac{(-1)^{\frac{k}{2}} L_f^\ast(k-1)}{(k-2)!}f\]
holds for any normalized Hecke eigenform $f\in S_k$.
We complete the proof.
\end{proof}

It can be shown that $G_{r,s}^\frac12(q)$'s with $r,s$ odd are linearly independent over $\C$ (the proof is done by using the Vandermonde determinant as in \cite[Theorem 5]{KT}).
Hence Theorem \ref{main} gives an explicit formula for a normalized Hecke eigenform of $S_k$ in terms of the chosen basis of the space $\mathcal{DE}_k$.
Here the notation $G^\frac12$ originates from Yamamoto's study \cite{Yamamoto} of an interpolation of multiple zeta and zeta-star values and our work is motivated by an unpublished work on $\zeta^\frac12$ by Herbert Gangl and Wadim Zudilin.

\

Theorem \ref{main} provides a natural and intrinsic characterization of linear relations among double zeta values obtained from cusp forms.

\begin{corollary}{\rm (\cite{MT})}
For $f\in S_k$, we have
\[ \sum_{\substack{r+s=k\\r,s\ge1:{\rm odd}}} q_{r,s}(f)\zeta^\frac12(r,s)  =0 ,\] 
where we write $\zeta^\frac12(r,s)=\zeta(r,s)+\frac12\zeta(r+s)$.
\end{corollary}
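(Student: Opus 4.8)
The plan is to derive the corollary directly from Theorem~\ref{main} by comparing constant terms of the two sides as $q$-series. First I would record that the map $f\mapsto \sum_{r,s} q_{r,s}(f)\zeta^\frac12(r,s)$ is $\C$-linear in $f$: by definition $q_{r,s}(f)$ depends linearly on the completed $L$-values $L_f^\ast(s)=\int_0^\infty f(it)t^{s-1}dt$, which are themselves linear in $f$. Since $S_k$ is spanned by normalized Hecke eigenforms, it therefore suffices to establish the vanishing for a single such eigenform, and the statement for arbitrary $f\in S_k$ follows by linearity.

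So I would fix a normalized Hecke eigenform $f\in S_k$ and apply Theorem~\ref{main}. The key step is to read off the coefficient of $q^0$ on both sides of \eqref{eq:main}. On the right-hand side, $f$ is a cusp form, so its $q$-expansion lies in $q\,\C[[q]]$ and the constant term is $0$. On the left-hand side I would extract the constant term of each $G_{r,s}^\frac12(q)=G_{r,s}(q)+\frac12 G_{r+s}(q)$. Inspecting the definitions, every $q$-series appearing in $G_{r,s}(q)$ beyond the leading term $\wzeta(r,s)$, namely $g_h(q)$, $g_{r,s}(q)$ and $\varepsilon_{r,s}(q)$, is a sum over strictly positive integers $c,d,\dots$ and hence belongs to $q\,\C[[q]]$; the same holds for the nonconstant part of $G_{r+s}(q)$. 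Thus the constant term of $G_{r,s}^\frac12(q)$ is exactly $\wzeta(r,s)+\frac12\wzeta(r+s)$.

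It then remains to reconcile the normalizations. Writing $k=r+s$, one has $\wzeta(r,s)=\zeta(r,s)/(2\pi\sqrt{-1})^k$ and $\wzeta(r+s)=\zeta(k)/(2\pi\sqrt{-1})^k$, so the constant term of $G_{r,s}^\frac12(q)$ equals $(2\pi\sqrt{-1})^{-k}\zeta^\frac12(r,s)$. Comparing the $q^0$-coefficients in \eqref{eq:main} therefore yields
\[ (2\pi\sqrt{-1})^{-k}\sum_{\substack{r+s=k\\r,s\ge1:{\rm odd}}} q_{r,s}(f)\,\zeta^\frac12(r,s)=0, \]
and multiplying through by $(2\pi\sqrt{-1})^k$ gives the identity for eigenforms; the linearity observed at the outset then extends it to all $f\in S_k$.

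This argument is essentially bookkeeping, so I do not anticipate a genuine obstacle. The one point that requires care is the passage from Hecke eigenforms, for which Theorem~\ref{main} is stated, to arbitrary $f\in S_k$, and this is exactly what the linearity of $f\mapsto q_{r,s}(f)$ supplies. A secondary point worth checking explicitly is that no hidden constant term enters through the correction term $\frac12\varepsilon_{r,s}(q)$ or through $\frac12 G_{r+s}(q)$: the former is transparently an element of $q\,\C[[q]]$, while the latter contributes precisely the $\frac12\wzeta(r+s)$ producing the $\frac12\zeta(r+s)$ term in $\zeta^\frac12(r,s)$.
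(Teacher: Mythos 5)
Your proposal is correct and follows essentially the same route as the paper: the author likewise derives the corollary by taking constant terms on both sides of \eqref{eq:main} and invoking the linearity $q_{i,j}(f+g)=q_{i,j}(f)+q_{i,j}(g)$ to pass from normalized Hecke eigenforms to all of $S_k$. Your verification that $g_h(q)$, $g_{r,s}(q)$, $\varepsilon_{r,s}(q)$ all lie in $q\,\C[[q]]$, and the bookkeeping of the $(2\pi\sqrt{-1})^{-k}$ normalization, simply make explicit what the paper leaves implicit.
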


\begin{proof}
We may derive the statement from taking constant terms as $q$-series on both sides of \eqref{eq:main} and the linearity $q_{i,j}(f+g)=q_{i,j}(f)+q_{i,j}(g)$ for $f,g\in S_k$.
\end{proof}

Let us illustrate an example of Theorem \ref{main}.
For $\Delta(q)=q\prod_{n>0}(1-q^n)^{24}=\sum_{n>0} \tau(n)q^n\in S_{12}$, after multiplication by $\frac{1}{680}\frac{4\cdot 8!}{(-1)^6 L_\Delta^\ast (11)}$ we get
\begin{equation}\label{eq:example}
\begin{aligned}
\frac{1}{680} \Delta(q) &= 22680 G_{9,3}^\frac12 (q)-35364 G_{7,5}^\frac12 (q) -29145 G_{5,7}^\frac12 (q) \\
&+13006 G_{3,9}^\frac12 (q) +22680 G_{1,11}^\frac12 (q),
\end{aligned}
\end{equation}
where we have used $L_\Delta^\ast(1)/L_\Delta^\ast(11)=1, \ L_\Delta^\ast(3)/L_\Delta^\ast(11) =L_\Delta^\ast(9)/L_\Delta^\ast(11)  =\frac{691}{1620}$ and $L_\Delta^\ast(5)/L_\Delta^\ast(11) =L_\Delta^\ast(7)/L_\Delta^\ast(11)  =\frac{691}{2520}$.

\

Observe that all coefficients of $G_{r,s}^\frac12(q)$ in \eqref{eq:example} are congruent to 568 modulo 691, which shows
\begin{equation}\label{eq:cong}
\frac{q_{1,11}(\Delta)}{L_\Delta^\ast(11)}\equiv \frac{q_{3,9}(\Delta)}{L_\Delta^\ast(11)}\equiv \frac{q_{5,7}(\Delta)}{L_\Delta^\ast(11)}\equiv \frac{q_{7,5}(\Delta)}{L_\Delta^\ast(11)}\equiv  \frac{q_{9,3}(\Delta)}{L_\Delta^\ast(11)} \not\equiv  0\mod{691}.
\end{equation}
This congruence is motivated by the Ramanujan congruence $\tau(n)\equiv \sigma_{11}(n) \mod{691}$. 
Unfortunately, the Ramanujan congruence is not an obvious consequence of the identity \eqref{eq:example} (we have to do some extra works on $\Q$-linear relations among $g_k(q)$'s, whose coefficient is $\sigma_{k-1}(n)$, and $g_{r,s}(q)$'s), but is obtained as a consequence of Popa's result \cite[Theorem 5.1]{Popa} for $\Delta(q)$, related to Manin's coefficient theorem.
Interestingly, the congruence \eqref{eq:cong} extends to $k\ge12$ with $\dim_\C S_k=1$.
Namely, for $p>k$ such that $p | B_k$, one can show that $q_{r,s}(f)/L_f^\ast(k-1)\equiv q_{r',s'}(f)/L_f^\ast(k-1) \mod{p}$ holds.
This congruence may be related to the Ramanujan-type congruences $a_f(n)\equiv \sigma_{k-1}(n) \mod{p}$ for a normalized Hecke eigenform $f=\sum_{n>0}a_f(n)q^n\in S_k$, where $p$ is a prime such that $p | B_k$ and $p>k$ (see e.g.~\cite[Section 5]{Popa}).
We may also expect a connection between \eqref{eq:cong} and Ihara's congruence \cite[p.258]{Ihara} (taken from \cite[Section 8]{B})
\[ \{\sigma_{3},\sigma_9\}-3\{\sigma_5,\sigma_7\}\equiv 0 \mod 691\]
of generators $\sigma_{2i+1}$ of the motivic Lie algebra $\mathfrak{g}^{\mathfrak{m}}$, since $\mathfrak{g}^{\mathfrak{m}}$ is one of dual structures of the algebra of multiple zeta values.

\

\noindent
{\bf Acknowledgement}.
Many thanks to Masanobu Kaneko for helpful comments on an earlier version of this paper.
This work is partially supported by JSPS KAKENHI Grant No. 18K13393.


\end{document}